\title{Bounding the number of arithmetical structures on graphs}
\author{Christopher Keyes}
\address{Department of Mathematics, Emory University, Atlanta, GA 30322}
\email{\href{mailto:christopher.keyes@emory.edu}{christopher.keyes@emory.edu}}
\email{\href{mailto:tomer.reiter@emory.edu}{tomer.reiter@emory.edu}}
\author{Tomer Reiter}
\date{\today}
\newtheorem{theorem}{Theorem}[section]
\newtheorem{lemma}[theorem]{Lemma}
\newtheorem{corollary}[theorem]{Corollary}
\theoremstyle{definition}
\newtheorem{example}[theorem]{Example}
\newtheorem{construction}[theorem]{Construction}
\newtheorem{remark}[theorem]{Remark}
\numberwithin{equation}{section}
\numberwithin{figure}{section}
\numberwithin{table}{section}
\newcommand{\N}{\mathbb{N}}
\newcommand{\Z}{\mathbb{Z}}
\newcommand{\wt}[1]{\widetilde{#1}}
\newcommand{\wh}[1]{\widehat{#1}}
\renewcommand{\r}{\textbf{r}}
\renewcommand{\d}{\textbf{d}}
\newcommand{\E}[1]{\#E(#1)}
\DeclareMathOperator{\dec}{dec}
\DeclareMathOperator{\diag}{diag}
\begin{document}

\begin{abstract}
	Let $G$ be a connected undirected graph on $n$ vertices with no loops but possibly multiedges. Given an arithmetical structure $(\r, \d)$ on $G$, we describe a construction which associates to it a graph $G'$ on $n-1$ vertices and an arithmetical structure $(\r', \d')$ on $G'$. By iterating this construction, we derive an upper bound for the number of arithmetical structures on $G$ depending only on the number of vertices and edges of $G$. In the specific case of complete graphs, possibly with multiple edges, we refine and compare our upper bounds to those arising from counting unit fraction representations.
\end{abstract}

\maketitle

\section{Introduction}

Let $G$ be a connected undirected graph with $n$ vertices labeled $v_1, \ldots, v_n$, containing no loops but possibly multiedges. Throughout this paper, we use $E(G)$ to refer to the edge set of $G$, $\delta_{ij}$ to denote the number of edges between $v_i$ and $v_j$, and $\deg v$ for the degree of the vertex $v$. An \textit{arithmetical structure on $G$} is a pair $(\textbf{r}, \textbf{d}) \in \N^{n} \times \N^n$, such that $\gcd(\textbf{r}) = \gcd(r_1, ..., r_n) = 1$, satisfying the system
\begin{align}\label{eq:astruct_system}
	\nonumber r_1d_1 = r_2\delta_{12} + &\cdots + r_n\delta_{1n}\\
	 r_2d_2 = r_1\delta_{21} + &\cdots + r_n\delta_{2n}\\
	\nonumber   &\vdots\\
	\nonumber r_nd_n = r_1\delta_{n1} + &\cdots + r_{n-1}\delta_{n(n-1)}.
\end{align}
Equivalently, an arithmetical structure is the data of $\textbf{r},\textbf{d} \in \N^n$ satisfying the matrix equation
\begin{equation}\label{eq:astruct_matrix}
\begin{pmatrix} -d_1 &\delta_{12}& \cdots &\delta_{1n}\\
				\delta_{21} & -d_2 & \cdots &\delta_{2n}\\
				\vdots & \vdots & \ddots & \vdots\\
				\delta_{n1} & \delta_{n2} & \cdots &-d_n
				\end{pmatrix}
	\begin{pmatrix} r_1\\ r_2\\ \vdots \\ r_n \end{pmatrix} = 
	\begin{pmatrix} 0\\ 0\\ \vdots \\ 0 \end{pmatrix}.
\end{equation}
Note that specifying $\textbf{r}$ such that $r_i \mid \sum_{j \neq i} r_j\delta_{ij}$ is sufficient to recover $\textbf{d}$. Thus we may simply refer to $\textbf{r}$ as an arithmetical structure on $G$. We use $A(G)$ to denote the set of arithmetical structures on a graph $G$.

We remark that we could extend this definition of an arithmetical structure to a graph with loops. We simply amend \eqref{eq:astruct_system} by requiring 
\[r_id_i = \sum_{j=1}^n r_j\delta_{ij}\]
for all $i$. However, by absorbing $\delta_{ii}$ into $d_i$ for each $i$, it can be seen that $\textbf{r}$ defines an arithmetical structure on $G_0$, where $G_0$ is the graph obtained by removing all loops from $G$. Thus $A(G)$ is in one-to-one correspondence with $A(G_0)$, and for the remainder of this paper we will assume $G$ contains no loops.

While combinatorial in nature, arithmetical structures are related to the study of special fibers of relative proper minimal models of curves. They were introduced by Lorenzini, who proved that $A(G)$ is finite \cite{Lorenzini}. Aside from certain special cases, little is known beyond finiteness about $\#A(G)$. Braun et.\ al.\ \cite{GlassEtAl} succeeded in enumerating the number of arithmetical structures when $G$ is a path or a cycle, where they found connections to the Catalan numbers and certain binomial coefficients. Archer et.\ al\ \cite{ArcherEtAl} considered bidents --- paths with two prongs at one end --- and gave bounds again in terms of the Catalan numbers. Glass and Wagner \cite{GlassWagner} studied arithmetical structures on paths with a doubled edge, and formulated a conjecture for how $\#A(G)$ grows in this case, depending on the path length and the location of the doubled edge.

In this paper, we introduce a construction in Section \ref{sec:recursive} to reduce an arithmetical structure on a graph $G$ with $n$ vertices into an arithmetical structure on an associated graph $G'$ with $n-1$ vertices. Our primary application of this construction is to derive an explicit general upper bound for the number of arithmetical structures on a graph $G$, depending only on the number of vertices and edges.
\begin{theorem}\label{thm:bound}
Let $G$ be a connected, undirected graph on $n$ vertices, with no loops but possible multiedges. Then the following is an upper bound for the number of arithmetical structures on $G$.
\[\#A(G) \leq \frac{n!}{2} \cdot \E{G}^{2^{n-2} - 1} \cdot \E{G}^{2^{n-1} \cdot \frac{1.538\log(2)}{(n-1)\log(2) + \log(\log(\E{G}))}}.\]
\end{theorem}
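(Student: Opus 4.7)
The plan is to iterate the construction of Section \ref{sec:recursive}. Starting from an arithmetical structure $(\r, \d)$ on $G = G_0$, successive applications produce a sequence $G_0, G_1, \ldots, G_{n-2}$ with $G_k$ having $n-k$ vertices, terminating at a two-vertex graph $G_{n-2}$ joined by $\delta := \E{G_{n-2}}$ edges. Since the vertex eliminated at each step may depend on the particular structure, I would fix at the outset an ordered choice of which $n-2$ vertices of $G$ to successively remove (equivalently, an unordered pair of survivors together with an ordering of the removed ones), of which there are exactly $\frac{n!}{2}$. The bound is then obtained by counting arithmetical structures whose reduction follows each prescribed sequence and summing, explaining the leading $\frac{n!}{2}$ factor.

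With the elimination sequence fixed, the count depends on three ingredients extracted from the construction. First, an \emph{edge-growth} bound of the form $\E{G_{k+1}} \leq \E{G_k}^2$, which iterates to $\E{G_{n-2}} \leq \E{G}^{2^{n-2}}$. Second, a \emph{fiber bound}: the reduction $G_k \to G_{k+1}$ is at most $\E{G_k}$-to-one on arithmetical structures, so the total preimage count across $k = 0, \ldots, n-3$ is at most $\prod_{k=0}^{n-3} \E{G_k} \leq \E{G}^{1 + 2 + \cdots + 2^{n-3}} = \E{G}^{2^{n-2} - 1}$, matching the middle factor in the theorem. Third, a \emph{terminal count}: since an arithmetical structure on the two-vertex graph $G_{n-2}$ is a coprime pair $(r_1, r_2)$ with $r_1, r_2 \mid \delta$, there are at most $\tau(\delta)^2$ of them, where $\tau$ denotes the divisor function.

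To turn $\tau(\delta)^2$ into the stated double-exponential expression, I would invoke a Nicolas--Robin-style bound of the form $\tau(m) \leq m^{c \log(2)/\log\log(m)}$ together with the substitutions $\log \delta \leq 2^{n-2} \log \E{G}$ and $\log\log \delta \leq (n-2)\log(2) + \log\log \E{G}$. After collecting exponents, this yields the final $\E{G}^{2^{n-1} \cdot \frac{1.538 \log(2)}{(n-1)\log(2) + \log\log \E{G}}}$ factor (with the factor $1.538 = 2c$ absorbing the squaring of $\tau(\delta)$). Multiplying the three ingredients with the $\frac{n!}{2}$ ordering factor gives the claimed inequality.

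The main obstacle is establishing the two structural claims about the construction: the fiber-size bound $\#\{\text{preimages}\} \leq \E{G_k}$ and the edge-growth bound $\E{G_{k+1}} \leq \E{G_k}^2$, both of which require careful analysis of the recursive construction in Section \ref{sec:recursive} and the divisibility constraints coming from the defining equations \eqref{eq:astruct_system}. Once these are in hand, the remainder is bookkeeping: a geometric series in the exponent, the elementary divisor-bound for the two-vertex base case, and tracking constants through the Nicolas--Robin estimate, with some care required to handle small $\E{G}$ where the bound $\log\log \E{G}$ degenerates.
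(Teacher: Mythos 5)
Your outline follows the same architecture as the paper's argument: the paper proves a restated version (Theorem \ref{thm:bound_restated}) by induction on $n$, which when unrolled is exactly your iteration --- a factor of $n$ at each level for the choice of the vertex carrying the maximal $r$-value (giving $n!/2$ in total), a factor of $\E{G_k}$ at each level for the possible values of $d$ at that vertex (the reduction for a fixed vertex and fixed $d$ is injective, and $d \le \E{G_k}$ when taken at a maximal vertex), the edge-growth bound $\E{G_{k+1}} \le \E{G_k}^2$, and a divisor count at the two-vertex base case. The two structural claims you flag as the ``main obstacle'' are true and are what the paper actually proves: injectivity of the reduction via a gcd argument in the proof of Theorem \ref{thm:bound_restated}, and the edge bound by maximizing $d \cdot e_{\neq 1} + \binom{\deg v_1}{2}$ as a function of $\deg v_1$. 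Leaving these unestablished is the largest hole in your write-up, though not a conceptual one.

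There is, however, a genuine bookkeeping error in your terminal count. The number of structures on the two-vertex graph with $\delta$ edges is exactly $\sigma_0(\delta^2)$ (Lemma \ref{lem:n=2}), and the paper bounds this by applying Nicolas' estimate $\sigma_0(m) \le m^{1.538\log(2)/\log(\log(m))}$ \emph{directly to} $m = \delta^2 \le \E{G}^{2^{n-1}}$, which is where the denominator $(n-1)\log(2) + \log(\log(\E{G}))$ comes from. Your route bounds the count by $\tau(\delta)^2$ and applies the divisor estimate to $\delta \le \E{G}^{2^{n-2}}$ before squaring; this produces the correct numerator $2^{n-1} \cdot 1.538\log(2)$ but the denominator $(n-2)\log(2) + \log(\log(\E{G}))$, which is smaller, so you obtain a strictly weaker inequality than the one stated. (Your parenthetical ``$1.538 = 2c$'' also misidentifies the constant: in the cited bound of Nicolas the constant is $1.538$ itself, not $0.769$; the doubling from squaring is what turns $2^{n-2}$ into $2^{n-1}$ in the numerator.) The fix is cost-free: replace $\tau(\delta)^2$ by the exact value $\sigma_0(\delta^2)$ and feed $\delta^2$ into the monotone divisor bound, as the paper does via its abstract majorant $f$.
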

\noindent Section \ref{sec:bounds} is devoted to the proof of Theorem \ref{thm:bound}.

Our construction generalizes the \textit{smoothing} process used in \cite{GlassEtAl}, \cite{ArcherEtAl}, and \cite{GlassWagner}. In certain special cases, it is the inverse of Lorenzini's \textit{blowup} construction \cite[1.8]{Lorenzini}, and extends observations made by Corrales and Valencia about the arithmetical structures on the \textit{clique-star} transform of a graph \cite{CorralesValencia_clique_star}.

In Section \ref{sec:Egyptian}, we discuss the special case of graphs with $n$ vertices and $m$ edges between each pair of vertices, which we denote by $mK_n$. We first give a refinement of Theorem \ref{thm:bound} before making connections between their arithmetical structures and Egyptian fractions. An \textit{Egyptian fraction} describes an integer fraction $a/m$ as the sum of unit fractions,
\begin{equation}\label{eq:egypt_frac_general}
	\frac{1}{x_1} + \cdots + \frac{1}{x_n} = \frac{a}{m}.
\end{equation}
These representations have been studied from many angles over the years --- for a brief survey see the introduction of \cite{Bleicher}. There also remain many open problems about Egyptian fractions, including the Erd{\"o}s--Straus Conjecture, which concerns the existence of a representation for all $m$ in the case where $a=4$ and $n=3$ in \eqref{eq:egypt_frac_general}. See \cite{Guy} for more open problems related to Egyptian fractions.

We are interested in Egyptian fractions with $a=1$,
\begin{equation}\label{eq:egypt_frac}
	\frac{1}{x_1} + \cdots + \frac{1}{x_n} = \frac{1}{m},
\end{equation} In Theorem \ref{thm:egypt_complete} we describe a one-to-one correspondence between integer solutions to \eqref{eq:egypt_frac} and $A(mK_n)$. This connection in the case of $K_n$ was also noted in \cite{HarrisLouwsma}, in which the integers that can appear as the largest $r$-value for an arithmetical structure on $K_n$ were partially classified. We may then use the known results about Egyptian fraction representations to give an asymptotic upper bound for $\#A(mK_n)$ which improves on Theorem \ref{thm:bound}. 

\subsection*{Acknowledgements} The authors would like to thank Nathan Kaplan and Joel Louwsma for their comments which helped strengthen this paper and alerted them to several results in the literature they had not found. They would also like to thank David Zureick--Brown for many helpful conversations and for suggesting edits on a draft of this paper.

\section{A recursive construction}
\label{sec:recursive}

We now describe a construction which associates to an arithmetical structure $(\r,\d)$ on $G$ an arithmetical structure $(\r', \d')$ on an associated graph $G'$ possessing $n-1$ vertices. The process of obtaining $G'$ is described precisely below in Construction \ref{con:recursive}.

\begin{construction}
\label{con:recursive}
	Let $G$ be a connected undirected graph with $n$ vertices, with $v_i$ and $\delta_{ij}$ having their usual meanings. For any choice of vertex $v_i$ and positive ingeter $s$, we define a graph $G(v_i,s)$ as follows: $G(v_i, s)$ has $n-1$ vertices, obtained by removing the $i$-th vertex from $G$, so
	\[V\left(G(v_i,s)\right) = V(G) - \set{v_i}.\]
	The edges of $G(v_i,s)$ are given by
	\[\delta_{jk}' = \delta_{ij}\delta_{ik} + s\delta_{jk},\]
	where $\delta_{jk}'$ denotes the number of edges between distinct vertices $v_j$ and $v_k$ in $G(v_i,s)$.
\end{construction}

\begin{remark} \label{rem:clique_star}
	Alternatively, we could envision $G(v_i, s)$ as the union of $s(G - v_i)$ with the graph obtained by performing a star-clique type operation around $v_i$, in which $v_i$ is removed from its star and $\delta_{ij}\delta_{ik}$ edges are added between each pair of distinct vertices $v_j$ and $v_k$. This description makes more apparent that when $s=1$ and the star of $v_i$ is simple (i.e.\ $\delta_{ij} = 1$ for all $v_j$ adjacent to $v_i$), Construction \ref{con:recursive} is inverse to the clique-star transform described in \cite[\S 5]{CorralesValencia_clique_star}. More precisely, using the notation of \cite[\S 5]{CorralesValencia_clique_star}, suppose $G$ is a graph containing a clique $C$ and $\wt{G} = cs(G,C)$ is its clique star transform with new vertex $v$. Then our construction applied to $v$ with $s = 1$ recovers the original graph, i.e.\ $\wt{G}(v, 1) = G$.
\end{remark}

We illustrate Construction \ref{con:recursive} with an example.

\begin{example}
\label{ex:recursive}
Consider the graph $G$ shown below in Figure \ref{fig:ex_G_1}. 

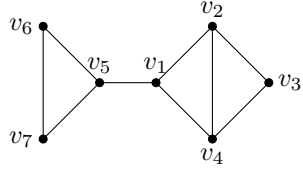
\begin{figure}[H]
\centering
\caption{$G$ with vertices labeled.}
\label{fig:ex_G_1}
\vspace{1ex}

\begin{tikzpicture}[scale=0.75]
	\filldraw[black] (2, 0) circle (2pt) node[right] {$v_3$};
	\filldraw[black] (1,1) circle (2pt) node[above] {$v_2$};
	\filldraw[black] (1,-1) circle (2pt) node[below] {$v_4$};
	\filldraw[black] (0, 0) circle (2pt) node[above] {$v_1$};
	\filldraw[black] (-1, 0) circle (2pt) node[above] {$v_5$};
	\filldraw[black] (-2, 1) circle (2pt) node[left] {$v_6$};
	\filldraw[black] (-2, -1) circle (2pt) node[left] {$v_7$};
	
	\draw[black] (0,0) -- (1,1);
	\draw[black] (1,1) -- (2,0);
	\draw[black] (1,1) -- (1,-1);
	\draw[black] (2,0) -- (1,-1);
	\draw[black] (1,-1) -- (0,0);
	\draw[black] (-1,0) -- (0,0);
	\draw[black] (-1,0) -- (-2,1);
	\draw[black] (-1,0) -- (-2,-1);
	\draw[black] (-2,1) -- (-2,-1);
\end{tikzpicture}
\end{figure}

\noindent Using Construction \ref{con:recursive} with $i=1$ and $s = 2$, we obtain $G' = G(v_1,2)$. This is shown step-by-step in Figure \ref{fig:ex_G_2}. In step (i) we highlight $v_1$ and its incident edges in red to be removed. Step (ii) shows the graph $2(G - v_1)$ and finally step (iii) shows in blue the additional $\delta_{1j}\delta_{1k}$ edges added for each pair of remaining vertices.

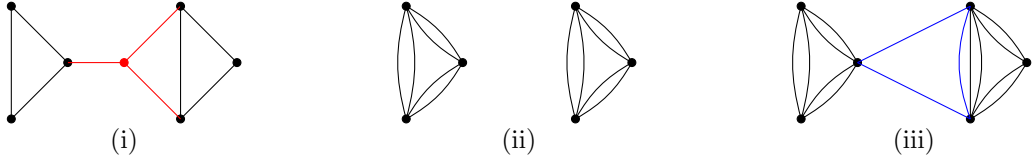
\begin{figure}[H]
\centering
\caption{Obtaining $G' = G(v_1, 2)$ from Construction \ref{con:recursive}.}
\label{fig:ex_G_2}
\vspace{1ex}

\begin{tikzpicture}[scale=0.75]
	\filldraw[black] (2, 0) circle (2pt);
	\filldraw[black] (1,1) circle (2pt);
	\filldraw[black] (1,-1) circle (2pt);
	\filldraw[red] (0, 0) circle (2pt);
	\filldraw[black] (-1, 0) circle (2pt);
	\filldraw[black] (-2, 1) circle (2pt);
	\filldraw[black] (-2, -1) circle (2pt);
	
	\draw[red] (0,0) -- (1,1);
	\draw[black] (1,1) -- (2,0);
	\draw[black] (1,1) -- (1,-1);
	\draw[black] (2,0) -- (1,-1);
	\draw[red] (1,-1) -- (0,0);
	\draw[red] (-1,0) -- (0,0);
	\draw[black] (-1,0) -- (-2,1);
	\draw[black] (-1,0) -- (-2,-1);
	\draw[black] (-2,1) -- (-2,-1);
	
	\draw (0,-1) node[below] {(i)};
	
	\filldraw[black] (9, 0) circle (2pt);
	\filldraw[black] (8,1) circle (2pt);
	\filldraw[black] (8,-1) circle (2pt);
	\filldraw[black] (6, 0) circle (2pt);
	\filldraw[black] (5, 1) circle (2pt);
	\filldraw[black] (5, -1) circle (2pt);
	
	\draw[black] (8,1) to [bend right = 15] (9,0);
	\draw[black] (8,1) to [bend left = 15] (9,0);
	\draw[black] (8,1) to [bend right = 15] (8,-1);
	\draw[black] (8,1) to [bend left = 15] (8,-1);
	\draw[black] (9,0) to [bend right = 15] (8,-1);
	\draw[black] (9,0) to [bend left = 15] (8,-1);
	\draw[black] (6,0) to [bend right = 15] (5,1);
	\draw[black] (6,0) to [bend left = 15] (5,1);
	\draw[black] (6,0) to [bend right = 15] (5,-1);
	\draw[black] (6,0) to [bend left = 15] (5,-1);
	\draw[black] (5,1) to [bend right = 15] (5,-1);
	\draw[black] (5,1) to [bend left = 15] (5,-1);
	
	\draw (7,-1) node[below] {(ii)};
	
	\filldraw[black] (16, 0) circle (2pt);
	\filldraw[black] (15,1) circle (2pt);
	\filldraw[black] (15,-1) circle (2pt);
	\filldraw[black] (13, 0) circle (2pt);
	\filldraw[black] (12, 1) circle (2pt);
	\filldraw[black] (12, -1) circle (2pt);
	
	\draw[black] (15,1) to [bend right = 15] (16,0);
	\draw[black] (15,1) to [bend left = 15] (16,0);
	\draw[black] (15,1) to (15,-1);
	\draw[black] (15,1) to [bend left = 20] (15,-1);
	\draw[black] (16,0) to [bend right = 15] (15,-1);
	\draw[black] (16,0) to [bend left = 15] (15,-1);
	\draw[black] (13,0) to [bend right = 15] (12,1);
	\draw[black] (13,0) to [bend left = 15] (12,1);
	\draw[black] (13,0) to [bend right = 15] (12,-1);
	\draw[black] (13,0) to [bend left = 15] (12,-1);
	\draw[black] (12,1) to [bend right = 15] (12,-1);
	\draw[black] (12,1) to [bend left = 15] (12,-1);
	\draw[blue] (13,0) -- (15,1);
	\draw[blue] (13,0) -- (15,-1);
	\draw[blue] (15,1) to [bend right = 20] (15,-1);
	
	\draw (14,-1) node[below] {(iii)};
\end{tikzpicture}
\end{figure}

Let $\r = (2,1,1,2,1,1,1)$, which gives an arithmetical structure on $G$ with $\textbf{d} = (2,5,3,2,4,2,2)$. We have $s = d_1 = 2$, and one may check that $\r' = (1,1,2,1,1,1)$ gives an arithmetical structure on $G' = G(v_1,2)$. This turns out to be an example of a more general phenomenon --- for any arithmetical structure $\r$ on $G$, take $\r' = (r_1, \ldots, \widehat{r_i}, \ldots, r_n)$, where $\widehat{r_i}$ denotes removal of the $i$-th entry from the tuple. Then for the graph $G(v_i,d_i)$, we find $\r'$ satisfies the requirements of \eqref{eq:astruct_system} for some appropriate $\d'$. Hence, it is an arithmetical structure after possible scaling. Lemma \ref{lem:recursive} verifies this observation in general.

\end{example}

\begin{lemma}\label{lem:recursive}
	Fix an arithmetical structure $(\r,\d)$ on $G$ and a vertex $v_i$ of $G$, and let $G' = G(v_i,d_i)$ as given by Construction \ref{con:recursive}. Set $g = \gcd(r_1, \ldots, \wh{r_i}, \ldots, r_n)$ and $\r' = (r_1/g, \ldots, \wh{r_i/g}, \ldots, r_n/g)$. Then $\r'$ is an arithmetical structure on $G'$.
\end{lemma}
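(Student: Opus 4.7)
The plan is to verify directly that $\r'$ satisfies system \eqref{eq:astruct_system} on $G'$ for an appropriate choice of $\d' \in \N^{n-1}$. Since the divisibility conditions $r_j' \mid \sum_{k \neq i,j} r_k' \delta_{jk}'$ are unaffected by simultaneously scaling all $r$-values, it suffices to work with the un-rescaled tuple $\wt{\r} = (r_1, \ldots, \wh{r_i}, \ldots, r_n)$: find positive integers $d_j'$ such that
\[
r_j\, d_j' \;=\; \sum_{k \neq i,j} r_k \delta_{jk}' \qquad\text{for each } j \neq i,
\]
with $\delta_{jk}' = \delta_{ij}\delta_{ik} + d_i\delta_{jk}$ coming from Construction \ref{con:recursive}. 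Dividing through by $g$ at the end preserves the equations and guarantees $\gcd(\r')=1$, so $\r'$ is an arithmetical structure on $G'$.

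The central computation is to substitute the formula for $\delta_{jk}'$ and split the sum into two pieces:
\[
\sum_{k \neq i,j} r_k \delta_{jk}' \;=\; \delta_{ij}\sum_{k \neq i,j} r_k \delta_{ik} \;+\; d_i \sum_{k \neq i,j} r_k \delta_{jk}.
\]
The equations at $v_i$ and $v_j$ in the original structure give $\sum_{k \neq i,j} r_k \delta_{ik} = r_i d_i - r_j \delta_{ij}$ and $\sum_{k \neq i,j} r_k \delta_{jk} = r_j d_j - r_i \delta_{ij}$. Substituting these, the two occurrences of $r_i d_i \delta_{ij}$ cancel and the expression collapses to $r_j(d_i d_j - \delta_{ij}^2)$. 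Defining $d_j' := d_i d_j - \delta_{ij}^2$ then yields the required identity.

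The main thing to check is that $d_j'$ is a positive integer. Integrality is immediate. For positivity, I plan to observe that discarding the non-negative remaining terms in the equations at $v_i$ and $v_j$ yields $r_i d_i \geq r_j \delta_{ij}$ and $r_j d_j \geq r_i \delta_{ij}$; multiplying gives $d_i d_j \geq \delta_{ij}^2$. A short case analysis shows that equality forces $v_i$ and $v_j$ to have no other neighbors, contradicting connectedness of $G$ whenever $n \geq 3$ (the $n=2$ case being degenerate since $G'$ then has a single vertex). Finally, since $g = \gcd(r_1,\ldots,\wh{r_i},\ldots,r_n)$ by construction, each $r_j/g$ is an integer and $\gcd(\r')=1$, so dividing every equation through by $g$ gives the desired arithmetical structure on $G'$, with $d_j'$ unchanged.
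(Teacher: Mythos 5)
Your proposal is correct and follows essentially the same route as the paper: both substitute the defining relation $\delta_{jk}' = \delta_{ij}\delta_{ik} + d_i\delta_{jk}$ into the system \eqref{eq:astruct_system} and collapse it via the equations at $v_i$ and $v_j$ to obtain $r_j(d_id_j - \delta_{ij}^2) = \sum_{k\neq i,j} r_k\delta_{jk}'$, then rescale by $g$. The only (minor) divergence is in verifying $d_j' > 0$: you prove $d_id_j \geq \delta_{ij}^2$ directly with a case analysis on equality, whereas the paper deduces positivity from the nonnegativity of the right-hand side together with the connectedness of $G'$; both are valid.
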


\begin{proof}
It suffices to consider the case $i=1$, as we can always renumber the vertices of $G$ so that $v_1$ is removed. By \eqref{eq:astruct_system}, we have the system
\begin{align}\label{eq:astruct_lemma}
	                  \nonumber r_2d_2 &= \frac{r_2\delta_{12} + \cdots + r_n\delta_{1n}}{d_1}\delta_{21} + \cdots + r_n\delta_{2n}\\
	 & \vdots\\
	\nonumber r_nd_n &= \frac{r_2\delta_{12} + \cdots + r_n\delta_{1n}}{d_1}\delta_{n1} + \cdots + r_{n-1}\delta_{n(n-1)}.
\end{align}
So for $2 \leq i \leq n$ we have:
\begin{align}\label{eq:astruct_on_Gprime}
	\nonumber r_i(d_1d_i - \delta_{1i}^2) &= \delta_{1i}(r_2\delta_{12} + \cdots + r_n\delta_{1n}) + d_1\sum_{2 \leq j \leq n, i\neq j} r_j\delta_{ij}\\
	%\nonumber &= \delta_{1i}(r_2\delta_{12} + ... + r_n\delta_{1n}) + d_1r_2\delta_{i2} + \cdots + d_1r_n\delta_{in}\\
	 &= \sum_{2 \leq j \leq n, i \neq j} r_j(\delta_{i1}\delta_{1j} + d_1\delta_{ij}).
	%\nonumber &= r_2(\delta_{i1}\delta_{12} + d_1\delta_{i2}) + \ldots + r_n(\delta_{i1}\delta_{1n} + d_1r_n\delta_{in}).
\end{align} 
Notice that Construction \ref{con:recursive} gives $\delta_{ij}' = \delta_{i1}\delta_{1j} + d_1\delta_{ij}$, so we have 
\[r_i(d_1d_i - \delta_{1i}^2) = \sum_{2 \leq j \leq n, i \neq j} r_j \delta_{ij}'\]
for all $2 \leq i \leq n$, which is precisely \eqref{eq:astruct_system} on the new graph $G'$. If $g > 1$ then we need to divide each $r_i$ by $g$ to obtain another arithmetical structure, which corresponds to scaling \eqref{eq:astruct_on_Gprime} by $1/g$. Let $(r_2', r_3', \ldots, r_n'), (d_2', d_3', \ldots, d_n')$ denote the new arithmetical structure on $G'$. Then explicitly, $r_i' = r_i/g$ and $d_i' = d_1d_i - \delta_{1i}^2$. Since the $r_i'$ are positive integers, the new numbers of edges between pairs of vertices are non-negative, and $G'$ is clearly connected, we have that $d_i'$ are also positive integers. Thus $(\textbf{r}', \textbf{d}')$ is an arithmetical structure on $G'$. \end{proof}

\begin{remark}
	If $(\r, \d)$ is an arithmetical structure on $G$, $d_1 = 1$, and $G' = G(v_1,1)$, then $G$ with its arithmetical structure $(\r,\d)$ is the blow up \cite[1.8]{Lorenzini} of $G'$ with its arithmetical structure $(\r', \d')$. In this case, one translates between our construction and Lorenzini's by taking $M = D' - A'$, where $D' = \diag(\d')$, $A'$ is the adjacency matrix of $G'$, and $\textbf{q}^T = (\delta_{12}, \ldots, \delta_{1n})$. Then $M_{\textbf{q}} = D - A$ is the matrix corresponding to the original arithmetical structure on $G$. As a consequence, we observe that when $d_1 = 1$, the critical groups of the arithmetical structures $\r$ on $G$ and $\r'$ on $G'$ are isomorphic. It may be interesting to study the relationships between the critical groups $(\r, \d)$ on $G$ and $(\r', \d')$ on $G(v_i,d_i)$ more generally.
\end{remark}

In this paper, when we have a fixed arithmetical structure $(\r,\d)$ on $G$ we will only be interested in the case where $s = d_i$ coming from Lemma \ref{lem:recursive}, and as mentioned in the proof we can always renumber the vertices of $G$ such that $i=1$. Hence, for the remainder of this paper we simply take $G' = G(v_1,d_1)$ when it will not create confusion. We will occasionally make use of the more general construction, as it is needed for the proof of Theorem \ref{thm:bound}.

\begin{example}[Paths]
\label{ex:paths}
	Let $P_n$ denote the path with $n$ vertices, i.e.\ $\delta_{ij} = 1$ if $j = i \pm 1$ and $\delta_{ij} = 0$ otherwise. Arithmetical structures on paths have been studied extensively, and it has been shown that $\#A(P_n) = C_{n-1} = \frac{1}{n} \binom{2n-2}{n-1}$, where $C_{n-1}$ denotes the $(n-1)$-th Catalan number \cite[Theorem 3]{GlassEtAl}. 
	
	If $n \geq 3$, we may apply Construction \ref{con:recursive} at vertex $i$ with $1 < i < n$ and find $P_n(v_i,1) = P_{n-1}$. To see this, we check that for $j<k$, as long as $(j,k) \neq (i-1,i+1)$, we have $\delta_{jk}' = \delta_{jk}$, since one of $\delta_{ij}$ or $\delta_{ik}$ is 0. Then we have \[\delta_{(i-1)(i+1)}' = \delta_{(i-1)i}\delta_{i(i+1)} + 1\cdot \delta_{(i-1)(i+1)} = 1.\]
	In particular, given an arithmetical structure $(\r,\d)$ on $P_n$ with $d_i = 1$ for some $1 < i < n$, we obtain an arithmetical structure $\r' = (r_1, \ldots, \wh{r_i}, \ldots, r_n)$ on $P_{n-1}$ ($r_1 = r_n = 1$ so we have automatically have $g=1$). This is precisely the \textit{smoothing} process of \cite[Proposition 5]{GlassEtAl}, so one may view Construction \ref{con:recursive} and Lemma \ref{lem:recursive} as a more general version of the smoothing of a path.
\end{example}

We conclude this section with another illustrative example which we will study in greater depth in Section \ref{sec:Egyptian}.

\begin{example}[complete (multi)graphs]
\label{ex:mKn}
	Let $K_n$ denote the complete graph on $n$ vertices. We will use $mK_n$ to denote the complete graph $K_n$ but instead with $m$ edges between each two vertices. The regular nature of this graph allows for a concise description of the graph $mK_n(v_1, s)$ obtained from Construction \ref{con:recursive} on $mK_n$.

	After removing the vertex $v_1$ and all incident edges, we are left with $n-1$ vertices. The value of $\delta_{ij}'$ is given by $\delta_{ij}' = \delta_{1i}\delta_{1j} + s\delta_{ij} = m^2 + sm$. Thus, $mK_n(v_1, s) = (m^2 + sm)K_{n-1}$.
	
	We illustrate this below with the arithmetical structure $\r = (6,3,2,1)$ on $K_4$, which gets reduced to the arithmetical structure $\r' = (3,2,1)$ on $K_4(v_1,1) = 2K_3$, which is further reduced to $\r'' = (2,1)$ on $2K_3(v_2,2) = 8K_2$.

\begin{figure}[H]
\centering
\caption{Applying Construction \ref{con:recursive} twice to $K_4$. Vertices are labeled with their $(r_i,d_i)$ values.}
\label{fig:ex_K_4}
\vspace{1ex}
	
	\begin{tikzpicture}[scale=0.75]
	\filldraw[black] (0, -1) circle (2pt) node[below] {$(3,3)$};
	\filldraw[black] (-1, 0) circle (2pt) node[left] {$(1,11)$};
	\filldraw[black] (1, 0) circle (2pt) node[right] {$(2,5)$};
	\filldraw[red] (0, 1) circle (2pt) node[above] {$(6,1)$};
	
	\draw[red] (0,1) -- (1,0);
	\draw[red] (0,1) -- (-1,0);
	\draw[red] (0,1) -- (0,-1);
	\draw[black] (-1,0) -- (1,0);
	\draw[black] (-1,0) -- (0,-1);
	\draw[black] (1,0) -- (0,-1);
	
	\draw[black, ->] (3,0) -- (4,0);
	
	\filldraw[blue] (7, -1) circle (2pt) node[below] {$(3,2)$};
	\filldraw[black] (6, 0) circle (2pt) node[left] {$(1,10)$};
	\filldraw[black] (8, 0) circle (2pt) node[right] {$(2,4)$};
	
	\draw[blue] (7,-1) to [bend right = 15] (6,0);
	\draw[blue] (7,-1) to [bend left = 15] (6,0);
	\draw[blue] (7,-1) to [bend right = 15] (8,0);
	\draw[blue] (7,-1) to [bend left = 15] (8,0);
	\draw[black] (8,0) to [bend right = 15] (6,0);
	\draw[black] (8,0) to [bend left = 15] (6,0);
	
	\draw[black, ->] (10,0) -- (11,0);
	
	\filldraw[black] (13, 0) circle (2pt) node[left] {$(1,16)$};
	\filldraw[black] (15, 0) circle (2pt) node[right] {$(2,4)$};
	\draw[black] (13,0) to [bend right = 10] (15,0);
	\draw[black] (13,0) to [bend right = 25] (15,0);
	\draw[black] (13,0) to [bend right = 40] (15,0);
	\draw[black] (13,0) to [bend right = 60] (15,0);
	\draw[black] (13,0) to [bend left = 10] (15,0);
	\draw[black] (13,0) to [bend left = 25] (15,0);
	\draw[black] (13,0) to [bend left = 40] (15,0);
	\draw[black] (13,0) to [bend left = 60] (15,0);

\end{tikzpicture}
\end{figure}
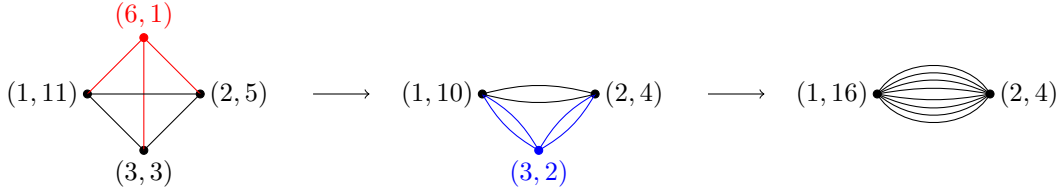
\end{example}

\section{Upper bounds on $\#A(G)$}
\label{sec:bounds}

In this section, we leverage Construction \ref{con:recursive} inductively to derive an upper bound for the number of arithmetical structures on an arbitrary graph. To state our main result, recall the divisor function, which counts the number of positive divisors of an integer $n$, denoted here by $\sigma_0(n)$. This theorem is a strengthening of Theorem \ref{thm:bound}, so we will prove this instead.

\begin{theorem}\label{thm:bound_restated}
	Let $G$ be a connected, undirected graph on $n$ vertices, with no loops but possible multiedges. Suppose $f$ is any monotonically increasing function such that $\sigma_0(m) \leq f(m)$ for all positive integers $m$. Then
\begin{equation} \label{eq:upper_bound_f}
\#A(G) \leq \frac{n!}{2} \E{G}^{2^{n-2} - 1} \cdot f\left(\E{G}^{2^{n-1}}\right).
\end{equation}
\end{theorem}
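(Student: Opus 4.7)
The plan is to induct on $n$. For the base case $n=2$, the graph $G$ consists of two vertices joined by $\delta_{12}=\E{G}$ edges, so \eqref{eq:astruct_system} reduces to pairs $(r_1,r_2)$ of coprime positive integers with $r_1 r_2 \mid \delta_{12}$. Grouping by the product $d = r_1 r_2$ and noting that each divisor $d \mid \delta_{12}$ admits exactly $2^{\omega(d)}$ coprime factorizations, I would compute
\[ \#A(G) \;=\; \sum_{d \mid \E{G}} 2^{\omega(d)} \;=\; \sigma_0(\E{G}^2) \;\leq\; f(\E{G}^2), \]
which matches \eqref{eq:upper_bound_f} when $n=2$.

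For the inductive step, given $(\r,\d) \in A(G)$ I would distinguish the lex-first vertex $v_{i^*}$ achieving $r_{i^*} = \max_j r_j$; then $r_{i^*} d_{i^*} = \sum_j r_j \delta_{i^*j} \leq r_{i^*} \deg(v_{i^*})$ forces $d_{i^*} \leq \deg(v_{i^*}) \leq \E{G}$. Partitioning $A(G) = \bigsqcup_i A_i$ by the distinguished vertex, Lemma~\ref{lem:recursive} associates to each $(\r,\d) \in A_i$ an arithmetical structure $(\r',\d') \in A(G(v_i,d_i))$, giving a map
\[ \Phi_i \colon A_i \longrightarrow \bigsqcup_{s=1}^{\deg(v_i)} A(G(v_i,s)), \qquad (\r,\d) \longmapsto \bigl(d_i,\,(\r',\d')\bigr). \]
The main obstacle I foresee is verifying that $\Phi_i$ is injective: given $(i,s,(\r',\d'))$, one must recover $(\r,\d)$ uniquely. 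Writing $r_j = g r_j'$ for $j \neq i$, the defining relation at $v_i$ becomes $r_i s = g S$ with $S = \sum_{j \neq i} r_j' \delta_{ij}$; setting $h = \gcd(s,S)$, $s_0 = s/h$, $S_0 = S/h$, the coprimality $\gcd(s_0,S_0)=1$ forces $s_0 \mid g$, so $r_i = k S_0$ and $g = k s_0$ for some $k \geq 1$; the condition $\gcd(\r)=1$ translates to $\gcd(r_i,g)=1$, which forces $k=1$.

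With injectivity of each $\Phi_i$ in hand, $|A_i| \leq \sum_{s=1}^{\deg(v_i)} \#A(G(v_i,s))$. Next I would bound
\[ \E{G(v_i,s)} = \sum_{j<k,\,j,k \neq i}\bigl(\delta_{ij}\delta_{ik} + s\delta_{jk}\bigr) \leq \tfrac{\deg(v_i)^2}{2} + s\bigl(\E{G}-\deg(v_i)\bigr), \]
which for $s \leq \deg(v_i)$ is maximized at $\deg(v_i) = \E{G}$, giving $\E{G(v_i,s)} \leq \E{G}^2/2 \leq \E{G}^2$. Since the bounding function $F(n-1,E') := \tfrac{(n-1)!}{2}(E')^{2^{n-3}-1} f\bigl((E')^{2^{n-2}}\bigr)$ is monotonic in $E' \geq 1$, the inductive hypothesis yields $\#A(G(v_i,s)) \leq \tfrac{(n-1)!}{2}\,\E{G}^{2^{n-2}-2} f\bigl(\E{G}^{2^{n-1}}\bigr)$.

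Finally, summing over $i$ and $s$ and invoking the handshake identity $\sum_i \deg(v_i) = 2\E{G}$,
\[ \#A(G) \leq \sum_i \sum_{s=1}^{\deg(v_i)} \tfrac{(n-1)!}{2}\,\E{G}^{2^{n-2}-2} f\bigl(\E{G}^{2^{n-1}}\bigr) \;=\; (n-1)!\,\E{G}^{2^{n-2}-1} f\bigl(\E{G}^{2^{n-1}}\bigr), \]
which is bounded by $\tfrac{n!}{2}\,\E{G}^{2^{n-2}-1} f\bigl(\E{G}^{2^{n-1}}\bigr)$ whenever $n \geq 2$, closing the induction.
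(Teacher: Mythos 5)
Your proposal is correct and follows essentially the same route as the paper: induction on $n$, partitioning $A(G)$ by the vertex carrying the maximal $r$-value, bounding $d_i$ at that vertex, reducing via Construction~\ref{con:recursive} and Lemma~\ref{lem:recursive}, proving injectivity of the reduction once $v_i$ and $d_i$ are fixed, and bounding $\E{G(v_i,s)}$ by $\E{G}^2$ before invoking the inductive hypothesis. Your minor variations --- computing the base case via coprime factorizations, a more explicit recovery argument for injectivity, and summing $s$ only up to $\deg(v_i)$ with the handshake lemma to get the slightly sharper constant $(n-1)!$ in place of $n!/2$ --- are all sound refinements rather than a different method.
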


We will now justify that Theorem \ref{thm:bound} follows from Theorem \ref{thm:bound_restated}. In \cite{Nicolas}, an explicit upper bound for the divisor function is given to be
\[\sigma_0(m) \leq m^{\frac{1.538\log(2)}{\log(\log(m))}}.\]
Note that the right hand side is monotonically increasing in $m$. Taking $f(m) = m^{\frac{1.538\log(2)}{\log(\log(m))}}$ produces the upper bound for $\#A(G)$ given by Theorem \ref{thm:bound} in the introduction, so it follows directly from Theorem \ref{thm:bound_restated}.

The proof of Theorem \ref{thm:bound_restated} proceeds by induction on the number of vertices of $G$. We take care of the base case, when $n=2$, in Lemma \ref{lem:n=2}. This is where the divisor function is introduced. We then prove an independent result in Theorem \ref{thm:largest_r1}, which provides an upper bound for the $r_i$-values depending only on $n$ and $\E{G}$. Next we prove Theorem \ref{thm:bound_restated} using Lemma \ref{lem:n=2} and some of the ideas from the proof of Theorem \ref{thm:largest_r1}. We conclude this section by comparing our result to the known values of $\#A(G)$ when $G = P_{n+1}$ is a path.

\begin{lemma}\label{lem:n=2}
Let $G$ be a graph with two vertices, $v_1$ and $v_2$. If $(r_1, r_2)$ is an arithmetical structure on $G$ then $r_1, r_2 \mid \E{G}$ and so $r_1,r_2 \leq \E{G}$. The total number of arithmetical structures on $G$ is precisely $\sigma_0(\E{G}^2)$. 
\end{lemma}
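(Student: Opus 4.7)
The plan is to unpack the defining system \eqref{eq:astruct_system} in the simplest possible setting and then reduce the count of arithmetical structures to a classical divisor-counting identity.

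First I would write $m = \#E(G) = \delta_{12}$, so that \eqref{eq:astruct_system} collapses to the two equations $r_1 d_1 = r_2 m$ and $r_2 d_2 = r_1 m$. These say precisely that $r_1 \mid r_2 m$ and $r_2 \mid r_1 m$. Combined with the coprimality condition $\gcd(r_1, r_2) = 1$ built into the definition of an arithmetical structure, this immediately yields $r_1 \mid m$ and $r_2 \mid m$, and in particular $r_1, r_2 \leq m = \#E(G)$. Conversely, any pair $(r_1, r_2)$ of positive divisors of $m$ with $\gcd(r_1, r_2) = 1$ determines positive integers $d_1 = r_2 m/r_1$ and $d_2 = r_1 m/r_2$, producing an arithmetical structure. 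Hence $\#A(G)$ equals the number of ordered coprime pairs of divisors of $m$.

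To finish, I would factor $m = \prod_i p_i^{e_i}$ and count these pairs prime-by-prime. For each prime $p_i$, writing $r_1 = \prod_i p_i^{a_i}$ and $r_2 = \prod_i p_i^{b_i}$ with $0 \leq a_i, b_i \leq e_i$, the coprimality condition forces $\min(a_i, b_i) = 0$. The number of admissible pairs $(a_i, b_i)$ is therefore
\[
(e_i+1) + e_i = 2e_i + 1,
\]
corresponding to either $a_i = 0$ with $b_i \in \{0, 1, \ldots, e_i\}$, or $b_i = 0$ with $a_i \in \{1, \ldots, e_i\}$. Multiplying over primes gives
\[
\#A(G) = \prod_i (2e_i + 1) = \sigma_0\!\left(\prod_i p_i^{2e_i}\right) = \sigma_0(m^2) = \sigma_0\!\left(\#E(G)^2\right).
\]

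There is no real obstacle here; the only point requiring any care is making the reduction $r_1 \mid r_2 m \Rightarrow r_1 \mid m$ explicit via $\gcd(r_1, r_2) = 1$, and recognizing the resulting coprime-divisor-pair count as the divisor function of $m^2$ rather than some less familiar arithmetic quantity. The rest is a direct unwinding of the two-vertex case of \eqref{eq:astruct_system}.
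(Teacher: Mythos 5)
Your proof is correct. The divisibility half is identical to the paper's argument ($r_1 \mid \E{G} r_2$ plus $\gcd(r_1,r_2)=1$), but you handle the counting step differently: the paper constructs an explicit bijection from $A(G)$ to the divisors of $\E{G}^2$ via $(r_1,r_2) \mapsto \tfrac{\E{G}}{r_1} r_2$ and verifies injectivity and surjectivity, whereas you characterize $A(G)$ as the set of ordered coprime pairs of divisors of $m = \E{G}$ and count these multiplicatively, getting $\prod_i (2e_i+1)$ at each prime and then recognizing this product as $\sigma_0(m^2)$. Both routes are sound and of comparable length. The paper's bijection is arguably more structural --- it explains \emph{why} the answer is a divisor count of $m^2$ rather than merely matching formulas, and it is really the global version of your local computation (your exponent pair $(a_i,b_i)$ with $\min(a_i,b_i)=0$ corresponds to the exponent $e_i - a_i + b_i \in \{0,\dots,2e_i\}$ of their image divisor). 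Your version has the advantage of making the intermediate object, the count of coprime divisor pairs, explicit, and of requiring no surjectivity argument; you do rely on the standard multiplicativity of the count over primes, which is routine but worth a sentence. One small point to make fully explicit in a final write-up: the converse direction needs the observation that $r_1 \mid m$ and $r_2 \mid m$ make $d_1 = r_2 m/r_1$ and $d_2 = r_1 m/r_2$ positive integers, which you do state.
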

\begin{proof}
The divisibility statement follows from the fact that $r_1 \mid \E{G}r_2$ and $\gcd(r_1, r_2) = 1$. For the second part, we provide a bijection between the set of arithmetical structures on $G$ and divisors of $\E{G}^2$, defined by sending
$$(r_1, r_2) \mapsto \frac{\E{G}}{r_1}r_2$$
This is clearly well-defined. If $(r_1, r_2)$ and $(r_1', r_2')$ get mapped to the same integer then $r_1'r_2 = r_1r_2'$ and since $\gcd(r_1, r_2) = \gcd(r_1', r_2') = 1$ we get $r_1\mid r_1'$ and $r_1' \mid r_1$. Thus $r_1 = r_1'$ and $r_2 = r_2'$, so this map is injective. 

To demonstrate surjectivity, let $\E{G} = p_1^{\alpha_1}\cdots p_k^{\alpha_k}$, and let $p_1^{\beta_1} \cdots p_k^{\beta_k}$ be a factor of $\E{G}^2$. Then for each $i$, if $\beta_i \leq \alpha_i$, we add a factor of $p_i^{\alpha_i - \beta_i}$ to $r_1$, and otherwise we add a factor of $p_i^{\beta_i - \alpha_i}$ to $r_2$. This will result in the power of $p_i$ in the image of $(r_1,r_2)$ being $\beta_i$, and hence the $(r_1, r_2)$ obtained by this procedure has image equal to $p_1^{\beta_1} \cdots p_k^{\beta_k}$. \end{proof}

\begin{remark}
\label{rem:A_vs_Adec}
	We consider order to matter when enumerating $A(G)$. For example, if $G$ is the graph with two vertices and three edges ($G = 3K_2$ in the notation of Example \ref{ex:mKn}), we count the arithmetical structures $\r =(1,3)$ and $\r =(3,1)$ separately. In this case, $\#A(G) = \sigma_0(3^2) = 3$.
\end{remark}

With Construction \ref{con:recursive} and Lemma \ref{lem:n=2} we can now give an upper bound for the largest possible $r_1$ value on a given graph $G$, which depends only on the number of vertices and edges. We will then prove Theorem \ref{thm:bound_restated}.

\begin{theorem}\label{thm:largest_r1}
Let $\textbf{r}$ be an arithmetical structure on a graph $G$ with $n$ vertices. Reorder the vertices so that $r_1 \geq r_2 \ldots \geq r_n$. Then 
$$r_1 \leq \frac{1}{(n-1)!} \cdot \E{G}^{3\cdot2^{n - 2} -2}$$
\end{theorem}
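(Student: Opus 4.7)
The plan is to prove the theorem by induction on $n$. The base case $n = 2$ follows immediately from Lemma \ref{lem:n=2}, which gives $r_1 \leq \E{G}$, matching $\frac{1}{1!} \E{G}^{3 \cdot 2^0 - 2} = \E{G}$.

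For the inductive step, I would apply Construction \ref{con:recursive} at the vertex $v_1$ carrying the largest $r$-value to obtain $G' = G(v_1, d_1)$. By Lemma \ref{lem:recursive} the tuple $\r' = (r_2/g, \ldots, r_n/g)$ with $g = \gcd(r_2, \ldots, r_n)$ is an arithmetical structure on $G'$, and because dividing by $g$ preserves the ordering, its largest entry is $r_2' = r_2/g$.

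The crux is the inequality $r_1 \leq r_2' \E{G}$, which I would extract from a divisibility argument. Writing out the first equation of \eqref{eq:astruct_system} gives $r_1 d_1 = \sum_{j=2}^n r_j \delta_{1j} = g \sum_{j=2}^n r_j' \delta_{1j}$. Since $\gcd(r_1, \ldots, r_n) = 1$ forces $\gcd(r_1, g) = 1$, it follows that $r_1 \mid \sum_{j=2}^n r_j' \delta_{1j}$. The sum is a positive integer (by connectedness $\deg(v_1) \geq 1$ and $r_j' \geq 1$), so
\[
r_1 \leq \sum_{j=2}^n r_j' \delta_{1j} \leq r_2' \deg(v_1) \leq r_2' \E{G}.
\]

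Next I would bound the edges of $G'$. From $\delta_{jk}' = \delta_{1j}\delta_{1k} + d_1 \delta_{jk}$ one obtains $\E{G'} \leq \tfrac{1}{2}\deg(v_1)^2 + d_1(\E{G} - \deg(v_1))$; the equation at $v_1$ forces $d_1 \leq \deg(v_1) \leq \E{G}$, and a one-variable optimization yields $\E{G'} \leq \tfrac{1}{2}\E{G}^2$. Applying the induction hypothesis on $G'$ gives $r_2' \leq \frac{1}{(n-2)!}\E{G'}^{3 \cdot 2^{n-3} - 2}$, and chaining with the previous inequality yields
\[
r_1 \leq \frac{\E{G}^{3 \cdot 2^{n-2} - 3}}{(n-2)! \cdot 2^{3 \cdot 2^{n-3} - 2}}.
\]
Checking that this is at most $\frac{1}{(n-1)!} \E{G}^{3 \cdot 2^{n-2} - 2}$ reduces to $n-1 \leq 2^{3 \cdot 2^{n-3} - 2} \E{G}$, which is immediate for $n \geq 3$ since $\E{G} \geq 1$ and $2^{3 \cdot 2^{n-3} - 2} \geq n-1$. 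The main obstacle is the coprimality step yielding $r_1 \mid \sum r_j' \delta_{1j}$: without the hypothesis $\gcd(\r) = 1$ one only recovers $r_1 d_1 \leq r_2 \E{G}$, and the stray factor of $g$ would prevent the induction from closing.
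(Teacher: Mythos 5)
Your proof is correct, and the overall skeleton matches the paper's: induction on $n$ with base case Lemma \ref{lem:n=2}, reduction via Construction \ref{con:recursive} at the vertex of largest $r$-value, a quadratic bound on $\E{G(v_1,d_1)}$, and the inductive hypothesis applied to the reduced structure. The genuine difference is how you dispose of the gcd $g = \gcd(r_2,\ldots,r_n)$, which is indeed the crux. The paper keeps the factor of $g$ explicit: it writes $r_1 \leq \deg(v_1)\, g\, r_2'$ and then bounds $g$ by observing that $\gcd(r_1,g)=1$ forces $g \mid \delta_{1i}$ for every $i$, whence $(n-1)g \leq \deg(v_1) \leq \E{G}$; the factor $1/(n-1)!$ in the final bound accumulates precisely from these $g \leq \E{G}/(n-1)$ savings. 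You instead use $\gcd(r_1,g)=1$ to conclude $r_1 \mid \sum_{j} r_j'\delta_{1j}$ and hence $r_1 \leq r_2'\deg(v_1)$, eliminating $g$ entirely and saving a full factor of $\E{G}$ at each step compared to the paper's $r_1 \leq \frac{\E{G}^2}{n-1}r_2'$; you then recover the factorial from the sharper edge count $\E{G'} \leq \tfrac12 \E{G}^2$ via $2^{3\cdot 2^{n-3}-2} \geq n-1$. Your route is arguably cleaner and in fact yields the strictly stronger intermediate bound $r_1 \leq \E{G}^{3\cdot 2^{n-2}-3}\big/\big((n-2)!\,2^{3\cdot2^{n-3}-2}\big)$ before you deliberately weaken it to close the induction; the paper's route has the advantage that the explicit bound $g \leq \E{G}/(n-1)$ is reused in the proof of Theorem \ref{thm:bound_restated}. (Incidentally, the paper's final display contains typos, writing $2^{n-1}$ where $2^{n-2}$ is meant; your exponent bookkeeping is the correct one.)
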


\begin{proof}
The case $n = 2$ reduces to $r_1 \leq \#E(G)$, which follows from Lemma \ref{lem:n=2}, so we assume the statement is true for all graphs with $n - 1$ vertices. Let $G$ be a graph with $n$ vertices and take $(r_1, r_2, \ldots, r_n)$ to be an arithmetical structure on $G$.

By \eqref{eq:astruct_system}, we have
$$r_1d_1 = r_2\delta_{12} + \cdots + r_n\delta_{1n}$$
and so
$$r_1 \leq \left(\sum_{i=2}^n \delta_{1i}\right) r_2 = \deg(v_1) r_2.$$
Since $(r_2/g, \ldots, r_n/g)$ is an arithmetical structure on $G(v_1, d_1)$, where $g$ and $G'$ are as in Lemma \ref{lem:recursive}, we in turn have the inequality
$$r_1 \leq \deg(v_1) \cdot \left(\max_{(\textbf{r}', \textbf{d}') \in A(G)} \left(\max_{\textbf{r}'' \in A(G(v_1, d_1'))} r_2''\right)\right) \cdot \left(\max_{\textbf{r}' \in A(G)} \gcd(\textbf{r}')\right).$$
Here the nested maximum is over all possible arithmetical structures $(r_1', r_2', \ldots, r_n', d_1', d_2', \ldots, d_n') \in A(G)$, and over all possible arithmetical structure $(r_2'', r_3'', \ldots, r_n'') \in A(G(v_1, d_1'))$. The second maximum is over all arithmetical structures $\textbf{r}' \in A(G)$. 

Since $g \mid r_2, \ldots, r_n$ and $\gcd(r_1, r_2, \ldots, r_n) = 1$, we have $\gcd(r_1, g) = 1$. Then by \eqref{eq:astruct_system}, we have for all $i > 1$,
\begin{align} \label{eq:g_bound}
\nonumber &r_id_i = \sum_{j\neq i} r_j\delta_{ij} \text{, and so} \\
&r_1\delta_{1i} = r_id_i - \sum_{j > 1, j \neq i} r_j \delta_{ij}.
\end{align}
Since $\gcd(g, r_1) = 1$, and $g$ divides the right hand side of Equation \ref{eq:g_bound}, we have $g \mid \delta_{1i}$, so $g \leq \delta_{1i}$. Summing over all $i \neq 1$, we get 
\[(n-1)g \leq \sum_{i = 2}^n \delta_{in} = d(v_1) \leq \E{G},\]
so $g \leq \E{G}/(n-1)$. Here we trivially bound $\deg(v_1) \leq \E{G}$, and in general we can't do any better, since all edges in the graph could be incident to $v_1$.

By the inductive hypothesis we now have
\begin{align} \label{eq:largest_r1}
r_1 \leq \frac{\E{G}^2}{n-1} \cdot \left(\max_{(\textbf{r}', \textbf{d}') \in A(G)} \left(\max_{\textbf{r}'' \in A(G(v_1, d_1'))} r_2''\right)\right) \leq \frac{\E{G}^2}{(n-1)!} \left(\max_{(\textbf{r}', \textbf{d}') \in A(G)} \E{G(v_1, d_1')}\right)^{3\cdot2^{n-2}-2}
\end{align}

We will give an upper bound for this maximum. Using Construction \ref{con:recursive}, we have
$$\E{G(v_1, d_1')} \leq d_1' e_{\neq 1} + \binom{\deg(v_1)}{2}$$
where $e_{\neq 1}$ is the number of edges on $G$ not incident to $v_1$. The binomial coefficient arises from the final step of the construction for $G(v_1, d_1')$, where at worst every pair of edges adjacent to $v_1$ will add a new edge in $G(v_1, d_1')$. Now, we have again by \eqref{eq:astruct_system}
$$r_1'd_1' = r_2'\delta_{12} + ... + r_n'\delta_{1n} \leq \deg(v_1)r_2'$$
and so $d_1' \leq \deg(v_1)$. Therefore we have
$$\E{G(v_1, d_1')} \leq \deg(v_1)(\E{G} - \deg(v_1)) +  \binom{\deg(v_1)}{2}$$
which is a quadratic function in $\deg(v_1)$, which has a maximum of $(2\E{G} + 1)^2/8 \leq \E{G}^2$, so \\$\E{G(v_1, d_1')} \leq \E{G}^2$. So \ref{eq:largest_r1} becomes
$$r_1 \leq \frac{\E{G}^2}{n-1} \frac{1}{(n-2)!} \cdot \E{G}^{3\cdot2^{n-1}-4} = \frac{1}{(n-1)!}\E{G}^{3\cdot2^{n-1} - 2}.$$
\end{proof}

\begin{proof}[Proof of Theorem \ref{thm:bound_restated}]
We proceed by induction on $n$, the number of vertices. In the base case of $n = 2$, the inequality \eqref{eq:upper_bound_f} is implied by Lemma \ref{lem:n=2}. For the inductive step we assume that \eqref{eq:upper_bound_f} is true for all graphs with $n - 1$ vertices. First, note that by \eqref{eq:astruct_system}, for any arithmetical structure on $G$ with $r_1 \geq r_2 \geq \cdots \geq r_n$, we have 
\begin{align} \label{eq:upper_bound_di}
d_1 = \frac{r_2}{r_1} \delta_{12} + \frac{r_3}{r_1}\delta_{13} + \cdots + \frac{r_n}{r_1} \delta_{1n} \leq \delta_{12} + \delta_{13} + \cdots + \delta_{1n} \leq \E{G}.
\end{align} 
In general, if $i$ is an index where $r_i \geq r_j$ for all $j \neq i$, then the above argument also shows $d_i \leq \E{G}$. Next we make the following observation. Fix a vertex $v_i$ and also fix a prescribed value for $d_i$ for an arithmetical structure. Once we have fixed these values, the graph $G'$ referenced in Construction \ref{con:recursive} is fixed. We claim there is at most one arithmetical structure on $G$ which satisfies these criteria which reduced to any given arithmetical structure on $G'$. To see this, let $(r_1, r_2, \ldots, r_n)$ and $(r_1', r_2', \ldots, r_n')$ be two arithmetical structures satisfying these criteria. Assume that $(r_1/g, r_2/g, \ldots, r_n/g) = (r_1'/g', r_2'/g', \ldots, r_n'/g')$, where $g$ and $g'$ are the $\gcd$ of $r_1, r_2, \ldots, r_n$ and $r_1', r_2', \ldots, r_n'$, and $r_i$ and $r_i'$ have now been removed. So, we have
$$d_ir_i = g \sum_{j \neq i} \frac{r_j}{g}$$
$$d_ir_i' = g' \sum_{j \neq i} \frac{r_j}{g'}$$
and hence we have $r_ig' = r_i'g$. By definition, $\gcd(g, r_i) = 1$, so $r_i \mid r_i'$ and $r_i' \mid r_i$, and $g = g'$. So the two arithmetical structures on $G$ are equal. 

This claim lets us bound the number of arithmetical structures on $G$ as follows. For each vertex $v_i$, we count the number of arithmetical structures $(r_1, r_2, \ldots, r_n)$ where the maximum of the values of $\textbf{r}$ is $r_i$. By \eqref{eq:upper_bound_di} we have $d_i \leq \E{G}$, so we get
$$\#A(G) \leq \sum_{i=1}^n\sum_{d_i = 1}^{\E{G}} \#A(G(v_1, d_i)).$$
By the same argument as in the proof of Theorem \ref{thm:largest_r1}, the number of edges in $G_{d_i}'$ is bounded by $\E{G}^2$. So by induction, we have
$$\#A(G) \leq \sum_{i=1}^n\sum_{d_i = 1}^{\E{G}} \frac{(n-1)!}{2} \E{G}^{2^{n-2} - 2} f\left(\E{G}^{2^{n-1}}\right)$$
$$= \frac{n!}{2} \E{G}^{2^{n-2} - 1} f\left(\E{G}^{2^{n-1}}\right),$$
completing the proof.

%We can now replace $f$ with any function which is a monotonically increasing upper bound for $\sigma_0$. We will use
%$$\sigma_0(m) \leq m^{\frac{1.538\log(2)}{\log(\log(m))}},$$
%see \cite{Nicolas} which produces the bound in the statement of the theorem. 
\end{proof}

\begin{remark}
\label{rem:paths_comp}
	If more is known about the structure of the graph $G$ then a much more accurate bound for $\#A(G)$ may be possible. As an extreme case, consider the path $G = P_{n+1}$ (see Example \ref{ex:paths}), where we have the exact count in terms of a Catalan number, $\#A(P_{n+1}) = C_n$ \cite[Theorem 3]{GlassEtAl}. This grows asymptotically as $4^n/(n^{3/2}\sqrt{\pi})$. On the other hand, the bound of Theorem \ref{thm:bound} with $n+1$ vertices and $\E{G} = n$ includes doubly exponential terms roughly of the form $n^{2^n}$, massively outpacing the Catalan numbers. Such a disparity in this case is not so surprising, given that a path has the minimal number of edges for a connected graph on $n$ vertices and that in our result we assume nothing about $G$ beyond the number of vertices and edges. Furthermore, in the bound in our result, the number of vertices is the variable that has the much bigger impact on the growth.
	
We will further discuss this disparity between Theorem \ref{thm:bound} and the true value of $\#A(G)$ in Section \ref{sec:Egyptian} in the case where $G=mK_n$ (see Remark \ref{rem:mKn_growth}).
\end{remark}

\section{Arithmetical structures on $mK_n$}
\label{sec:Egyptian}

\subsection{Specializing to $G = mK_n$} 

Consider the special case of $G = mK_n$, as in Example \ref{ex:mKn}. Recall that $mK_n$ is the graph on $n$ vertices with $\delta_{ij} = m$ for all $i \neq j$. Let $A_{\dec} (mK_n)$ denote the subset of arithmetical structures $\r \in A(mK_n)$ such that $r_i \geq r_{i+1}$ for $1 \leq i < n$. Using the same proof strategy as that of Theorem \ref{thm:bound_restated}, we can exploit the regularity of $mK_n$ to give a refinement.

\begin{corollary}
\label{cor:bound_mKn}
	Let $mK_n$ and $A_{\dec}(mK_n)$ be as defined above. Then
	\begin{equation}\label{eq:bound_mKn}
	\#A_{\dec}(mK_n) \leq \frac{(n-1)!}{2} \left( \prod_{k=0}^{n-4} (n-k)^{2^{n-3-k}-1} \right) \left(m^{2^{n-2}-1}\right)  \left(f \left( m^{2^{n-1}} \prod_{k=3}^n k^{2^{k-2}} \right) + 1 \right),
	\end{equation}
	where $f$ is any monotonically increasing function that is an upper bound for $\sigma_0$. In particular, we may again take $f(x) = x^{\frac{1.538 \log(2)}{\log(\log(x))}}$ as above.
\end{corollary}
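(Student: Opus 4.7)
The plan is to induct on $n$, exploiting two features special to $G = mK_n$ that were unavailable in Theorem \ref{thm:bound_restated}: by Example \ref{ex:mKn} the reduction $mK_n(v_1, s) = (m^2 + sm)K_{n-1}$ stays inside the family of multi-complete graphs, and the assumption $r_1 \geq \cdots \geq r_n$ eliminates the summation over vertex choices (the maximum coordinate is always $r_1$) while persisting under Lemma \ref{lem:recursive}, since dividing through by the gcd $g$ of $r_2, \ldots, r_n$ preserves the ordering.

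For the base case $n = 2$, Lemma \ref{lem:n=2} gives $\#A(mK_2) = \sigma_0(m^2)$, which is always odd since $m^2$ is a perfect square. The involution $(r_1, r_2) \leftrightarrow (r_2, r_1)$ on $A(mK_2)$ has the unique fixed point $(1,1)$, so $\#A_{\dec}(mK_2) = (\sigma_0(m^2) + 1)/2 \leq (f(m^2) + 1)/2$, matching \eqref{eq:bound_mKn} at $n = 2$ (where both displayed products are empty and equal $1$).

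For the inductive step, fix $\r \in A_{\dec}(mK_n)$. The first line of \eqref{eq:astruct_system} gives $r_1 d_1 = m(r_2 + \cdots + r_n) \leq m(n-1) r_1$, so $d_1 \leq m(n-1)$. The uniqueness argument from the proof of Theorem \ref{thm:bound_restated} shows that the pair consisting of $d_1$ together with the reduced structure $\r' \in A_{\dec}((m^2 + d_1 m)K_{n-1})$ determines $\r$ uniquely, whence
\[
\#A_{\dec}(mK_n) \;\leq\; \sum_{d_1 = 1}^{m(n-1)} \#A_{\dec}\bigl((m^2 + d_1 m)K_{n-1}\bigr).
\]
Let $B(m,n)$ denote the right-hand side of \eqref{eq:bound_mKn}. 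Since $B(\cdot, n-1)$ is increasing in its first argument and $m^2 + d_1 m \leq n m^2$, applying the inductive hypothesis term-by-term reduces the claim to the algebraic inequality $m(n-1)\cdot B(nm^2, n-1) \leq B(m,n)$.

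The main obstacle is the bookkeeping verification of this last inequality (which in fact holds with equality). Substituting $m \mapsto nm^2$ in $m^{2^{n-3}-1}$ produces $n^{2^{n-3}-1}\, m^{2^{n-2}-2}$; the extra factor $m(n-1)$ then promotes $(n-2)!/2$ to $(n-1)!/2$ and $m^{2^{n-2}-2}$ to $m^{2^{n-2}-1}$. The reindexing $k \mapsto k - 1$ merges the fresh factor $n^{2^{n-3}-1}$ with the shifted product $\prod_{k=0}^{n-5}(n-1-k)^{2^{n-4-k}-1}$ to yield $\prod_{k=0}^{n-4}(n-k)^{2^{n-3-k}-1}$, matching the polynomial prefactor of $B(m,n)$. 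Finally, the argument of $f$ transforms as $(nm^2)^{2^{n-2}} \prod_{k=3}^{n-1} k^{2^{k-2}} = m^{2^{n-1}} \prod_{k=3}^{n} k^{2^{k-2}}$, since $n^{2^{n-2}}$ is exactly the $k = n$ term of the target product. Each of these identities is a direct exponent comparison, completing the induction.
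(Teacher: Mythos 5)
Your proposal is correct and follows essentially the same route as the paper: induction on $n$ with base case from Lemma \ref{lem:n=2}, the refined bound $d_1 \leq (n-1)m$, the sum $\#A_{\dec}(mK_n) \leq \sum_{d_1=1}^{(n-1)m} \#A_{\dec}((m^2+d_1m)K_{n-1})$ via the uniqueness claim from Theorem \ref{thm:bound_restated}, and then monotonicity plus the same exponent bookkeeping (which indeed closes with equality). Your base-case justification via the involution with unique fixed point $(1,1)$ is just a slightly more explicit version of the paper's identity $\#A_{\dec}(mK_2) = (\#A(mK_2)+1)/2$.
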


\begin{proof}
	The proof follows that of Theorem \ref{thm:bound_restated}, proceeding by induction on $n$. The base case of $n=2$ is again a consequence of Lemma \ref{lem:n=2}, since $\#A_{\dec}(mK_2) = \frac{\#A(mK_2) + 1}{2}$. Assume \eqref{eq:bound_mKn} holds for $mK_{n-1}$. The key improvement to the argument in the proof of Theorem \ref{thm:bound_restated} is that we can refine \eqref{eq:upper_bound_di} since $\delta_{1i} = m$ for all $2 \leq i \leq n$:
	\[d_1 = m\left( \frac{r_2}{r_1} + \cdots + \frac{r_n}{r_1} \right) \leq (n-1)m.\]
	After removing $v_1$, the same argument as in the proof of Theorem \ref{thm:bound_restated} gives
		\[\#A_{\dec}(mK_n) \leq \sum_{d_1 = 1}^{(n-1)m} \#A_{\dec}\left((m^2 + d_1m)K_{n-1}\right).\]
	Since the upper bound in \eqref{eq:bound_mKn} is monotonic in $m$ for fixed $n$, we can safely bound $\#A_{\dec}(mK_n)$ above by $(n-1)m$ times the upper bound for $\#A_{\dec}\left((nm^2)K_{n-1}\right)$ as follows:
	\begin{align*}
		\#A_{\dec}(mK_n) & \leq (n-1)m \cdot \frac{(n-2)!}{2} \left(\prod_{k=0}^{n-5} (n-1-k)^{2^{n-4-k}-1} \right) \left((nm^2)^{2^{n-3}-1} \right) \left(f\left((nm^2)^{2^{n-2}} \prod_{k=3}^{n-1} k^{2^{k-2}} \right) + 1 \right)\\
		&= \frac{(n-1)!}{2} \left( \prod_{k=1}^{n-4} (n-k)^{2^{n-3-k}-1} \right) \left(n^{2^{n-3}-1}\right) \left(m^{1 + 2(2^{n-3}-1)} \right) \left(f \left( m^{2(2^{n-2})}n^{2^{n-2}}\prod_{k=3}^{n-1} k^{2^{k-2}} \right) + 1 \right)\\
		&= \frac{(n-1)!}{2} \left(\prod_{k=0}^{n-4}(n-k)^{2^{n-3-k}-1}\right)\left(m^{2^{n-2}-1}\right) \left(f\left(m^{2^{n-1}} \prod_{k=3}^{n} k^{2^{k-2}} \right) + 1\right)
	\end{align*}
	By induction this bound holds for all $n \geq 2$ and $m \geq 1$.
\end{proof}

\begin{remark}
\label{rem:bound_Kn}
	For any fixed $n$, Corollary \ref{cor:bound_mKn} improves on Theorem \ref{thm:bound} by a factor of a constant depending on $n$, since $\#E(mK_n) = m\binom{n}{2}$. This is a substantial improvement if we hold $m$ fixed and $n$ is allowed to vary. Asymptotically however, this bound can be improved upon (see Corollary \ref{cor:egypt_frac_bounds}) using results on Egyptian fractions of Browning--Elsholtz \cite{BrowningElsholtz} and Elsholtz--Planitzer \cite{ElsholtzPlanitzer}, which we discuss in  Subsection \ref{subsec:conn_Egy}.
\end{remark}

To see how the bound of Corollary \ref{cor:bound_mKn} compares to reality, we can use Construction \ref{con:recursive} to enumerate all the arithmetical structures on $mK_n$ for several small values of $m$ and $n$. This also serves as a proof of concept for how one might use the construction to produce an algorithm to generate all arithmetical structures on a given graph more generally.

Let $(r_1, r_2, r_3)$ be a candidate for an arithmetical structure on $mK_3$, and assume that $r_1 \geq r_2 \geq r_3$. Then by Lemma \ref{lem:recursive} and Lemma \ref{lem:n=2}, a necessary condition to be an arithmetical structure is that $r_2,r_3 \mid (m^2 + d_1m)$. Assuming this, we fix an integer $d_1$, which by \eqref{eq:upper_bound_di} is no more than $2m$. Next, we can check all possible pairs of $r_2$ and $r_3$ satisfying the divisibility above, and verify whether the corresponding $r_1 = m\cdot(r_2 + r_3)/d_1$ forms an arithmetical structure $(r_1, r_2, r_3)$. The following conditions are necessary and sufficient for this to occur.
\begin{enumerate}
  \item $r_1 \geq r_2$ which is equivalent to $(r_2 + r_3)\cdot m/d_1 \geq r_2$, or $mr_3 \geq (d_1 - m)r_2$. 
  \item Since $r_1 \in \Z$, $d_1 \mid m(r_2 + r_3)$. 
  \item By construction, $d_1 \in \Z$, but we also need that $d_2, d_3 \in \Z$. This is the same as
  $$r_2 \mid mr_3 + m\cdot\frac{m}{d_1}(r_2 + r_3).$$
  The same is true with the roles of $r_2$ and $r_3$ reversed. 
  \item $\gcd(r_1, r_2, r_3) = 1$. If $\gcd(r_2, r_3) = 1$ this is automatically satisfied. 
\end{enumerate}

Using Lemma \ref{lem:n=2}, we can enumerate all the arithmetical structures on $(m^2 + d_1m)K_2$ for $d_1 = 1, 2, \ldots, 2m$ and use conditions (1) --- (4) above to determine which lift to arithmetical structures on $mK_3$. We have written code in Magma that implements this algorithm to enumerate $A_{\dec}(mK_n)$, which can be found at \href{https://www.dropbox.com/s/zpratrd5mpiiu3v/arithmeticalStructures.m?dl=0}{this webpage}. For $n > 3$, some extra steps need to be performed.

We can translate the conditions above for general $n$ to recursively compute arithmetical structures on $mK_n$. Since $(r_2, r_3, \ldots, r_n)$ may have a common factor, we need to include an extra step in the algorithm. We recursively compute all possible arithmetical structures on $(m^2 + d_1m)K_{n-1}$ for each $d_1$. However, we allow this function to return values of $(r_2, r_3, \ldots, r_n)$ with a common factor, but would otherwise be an arithmetical structure. We use this to generate a list of $(r_1, r_2, \ldots, r_n)$ satisfying conditions $1$ through $3$ above, and then at the end we check which of these satisfy condition $4$. 

Table \ref{tab:small_mKn} compares $\#A_{\dec}(mK_n)$, enumerated by the methods described above, and the upper bound given by the floor of the right hand side of \eqref{eq:bound_mKn} for several small values of $n$ and $m$. The comparison shows that the bound of Corollary \ref{cor:bound_mKn} leaves much room for potential improvement, with its growth quickly outpacing the true value. The listed values were able to be computed reasonably quickly, but generating the full set of arithmetical structures on $mK_n$ becomes computationally challenging even for small $m$ values when $n > 3$.

\begin{center}
\begin{table}[H]
%\small
\begin{tabular}{ | c || c | c || c | c || c | c |}
  \hline
  \multirow{2}{*}{$m$} & \multicolumn{2}{c||}{$n=3$} & \multicolumn{2}{c||}{$n=4$} & \multicolumn{2}{c|}{$n=5$}\\ \cline{2-7} 
  & $\#A_{\dec}(mK_3)$ & Cor. \ref{cor:bound_mKn} & $\#A_{\dec}(mK_4)$ & Cor. \ref{cor:bound_mKn} &  $\#A_{\dec}(mK_5)$ & Cor. \ref{cor:bound_mKn} \\ \hline
  1 & 3 & 20 & 14 & 688 & 147 & 8567815 \\ \cline{6-7}
  2 & 10 & 56 & 108 & 23028 \\
  3 & 21 & 127 & 339 & 173664 \\ 
  4 & 28 & 229 & 694 & 717812 \\
  5 & 36 & 362& 1104 & 2141953\\
  6 & 57 & 526 & 1816 & 5209709\\
  7 & 42 & 720 & 2021 & 11012969\\
  8 & 70 & 946 & 3363 & 21019441\\
  9 & 79 & 1201 & 4053 & 37117341\\
  10 & 96 & 1487 & 5370 & 61657730\\ \cline{4-5} 
  100 & 1106 & 142796 \\
  101 & 164 & 145584 \\ \cline{1-3}
\end{tabular}
\caption{\label{tab:small_mKn} A comparison of the value $\#A_{\dec}(mK_n)$ with the bound given in Corollary \ref{cor:bound_mKn}.}
\end{table}
\end{center}

\begin{remark}[Growth of $\#A_{\dec}(mK_n)$]
\label{rem:mKn_growth}
%	Given $m_1, n$, and an arithmetical structure on $m_1K_n$, Construction \ref{con:recursive} gives us a way to produce an associated arithmetical structure on $m_2K_{n-1}$, where $m_2 = m_1^2 + s_1m_1$ for the appropriate value of $s_1$.	One challenge in improving the bound given in Corollary \ref{cor:bound_mKn} is that it is difficult to determine when an arithmetical structure on $m_2K_{n-1}$ ``lifts" to an arithmetical structure on $m_1K_n$. 
	
%	From Table \ref{tab:small_mKn}, we can see that lifting is somewhat rare. For instance, applying Construction \ref{con:recursive} an arithmetical structure on $K_4$ produces an arithmetical structure on $(1 + d_1)K_3$, where $d_1$ could take the values $d_1 = 1, 2, 3$. There are a total of 59 arithmetical structures on $2K_3$, $3K_3$, and $4K_3$, and since we have only $\#A_{\dec}(K_4) = 14$, this means fewer than one in four lift. Using the same argument instead with $K_5$, Construction \ref{con:recursive} produces arithmetical structures on $2K_4$, $3K_4$, $4K_4$, or $5K_4$. There are a total of 2245 such arithmetical structures, of which only 147 lift to an arithmetical structure on $K_5$, fewer than one in fifteen.
	
%	This effect is then compounded in the proof of Corollary \ref{cor:bound_mKn} as Construction \ref{con:recursive} is inductively applied until the base case of $n=2$ is reached. A better understanding of when arithmetical structures lift in general could lead to substantial improvements.

This example will help illustrate why there is so much room for improvement. Given $m_1, n$, and an arithmetical structure on $m_1K_n$, Construction \ref{con:recursive} gives us a way to produce an associated arithmetical structure on $m_2K_{n-1}$, where $m_2 = m_1^2 + s_1m_1$ for the appropriate value of $s_1$. Iterating this procedure, we can get an arithmetical structure on $m_iK_{n - i + 1}$, where $m_i = m_{i-1}^2 + s_{i-1}m_{i-1}$ for $2 \leq i \leq n - 1$. For each such $i$, we therefore have that $m_i \geq m_{i-1}^2 + m_{i-1}$. Using this inequality for each $i$, the result of this process is an arithmetical structure on $m_{n-1}K_2$ with $m_{n-1} \geq f^{n-2}(m_1) = O(m_1^{2^{n-2}})$, where $f(m) = m^2 + m$. Furthermore, this is only for a given arithmetical structure. If we are interested in generating a list of all arithmetical structures on $m_1K_n$, there are many choices for the value of $s_1$ at each step. If every arithmetical structure on each of these $m_{n-1}K_2$ came from an arithmetical structure on $m_1K_n$ following this iterative procedure, then the comparison in Table \ref{tab:small_mKn} would be significantly closer. This is clearly not the case, and one challenge in improving the bounds given in Theorem \ref{thm:bound} and Corollary \ref{cor:bound_mKn} is that it is difficult to say which of which of these arithmetical structure on the base graphs such as $m_{n-1}K_2$ ``lift'' to arithmetical structures on the original graph, such as $m_1K_n$.

\end{remark}

\subsection{Connections to Egyptian fractions} 
\label{subsec:conn_Egy}

The study of Egyptian fractions focuses on integer solutions to \eqref{eq:egypt_frac}, for any given positive integers $m$ and $n$. Such solutions are in one-to-one correspondence with arithmetical structures on the graph $mK_n$. This allows us to use the theory of Egyptian fractions to study arithmetical structures on $mK_n$. While this correspondence is well known in the $m = 1$ case, we have not encountered this in the literature for general $m$, so we provide an elementary proof.

\begin{theorem}\label{thm:egypt_complete}
	The set $A(mK_n)$ is in one to one correspondence with solutions $(x_1, ..., x_n)$ to \eqref{eq:egypt_frac}. Explicitly, the arithmetical structure $(\r, \d) \in A(mK_n)$ corresponds to the solution $(d_1 + m, \ldots, d_n + m)$ to \eqref{eq:egypt_frac}.
\end{theorem}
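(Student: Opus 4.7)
The plan is to exhibit explicit maps in both directions and then verify they are mutually inverse. For the forward direction, start with an arithmetical structure $(\mathbf{r}, \mathbf{d}) \in A(mK_n)$ and use the fact that on $mK_n$, the defining system \eqref{eq:astruct_system} reduces to $r_i d_i = m\sum_{j\neq i} r_j$. Setting $R = r_1 + \cdots + r_n$, this rearranges to $r_i(d_i + m) = mR$ for each $i$, so that $1/(d_i + m) = r_i/(mR)$. Summing over $i$ gives $\sum_i 1/(d_i + m) = 1/m$, which shows $(d_1 + m, \ldots, d_n + m)$ solves \eqref{eq:egypt_frac}. Note that since $d_i \geq 1$, each $x_i = d_i + m$ is an integer strictly greater than $m$, as is forced by any unit-fraction representation of $1/m$.

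For the inverse direction, given integers $x_1, \ldots, x_n > m$ with $\sum 1/x_i = 1/m$, put $L = \operatorname{lcm}(x_1, \ldots, x_n)$ and define $r_i = L/x_i \in \N$ and $d_i = x_i - m \in \N$. Then $\sum r_i = L \sum 1/x_i = L/m$, which forces $m \mid L$ and gives $L = mR$ where $R = \sum r_i$. Multiplying through, $r_i x_i = L = mR$, i.e.\ $r_i(d_i + m) = m\sum_j r_j$, which upon subtracting $m r_i$ from both sides becomes the arithmetical structure equation $r_i d_i = m\sum_{j\neq i} r_j$ on $mK_n$.

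The one condition that requires a small additional argument is $\gcd(r_1, \ldots, r_n) = 1$. If $g$ divides every $r_i = L/x_i$, then $gx_i \mid L$ for all $i$, hence $\operatorname{lcm}(gx_1, \ldots, gx_n) = gL$ also divides $L$, which forces $g = 1$. This is the only slightly delicate step in the argument, and it is the closest thing to a real obstacle.

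Finally, I would check that the two constructions are mutually inverse. Starting from $(\mathbf{r}, \mathbf{d})$, the forward map yields $x_i = d_i + m = mR/r_i$; since $\gcd(r_i) = 1$, we have $\operatorname{lcm}(x_i) = mR / \gcd(r_i) = mR$, so the inverse map recovers $r_i = \operatorname{lcm}(x_j)/x_i$ and $d_i = x_i - m$ correctly. Conversely, starting from $(x_1, \ldots, x_n)$ and applying the inverse map followed by the forward map, $d_i + m = (x_i - m) + m = x_i$. Thus the two maps are inverse bijections, proving the theorem.
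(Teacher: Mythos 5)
Your proof is correct, and while the forward direction coincides with the paper's (both rewrite the defining equations on $mK_n$ as $r_i(d_i+m)=m\sum_j r_j$ and sum reciprocals), your inverse direction takes a genuinely different route. The paper sets $d_i = x_i - m$, shows by explicit row reduction that the matrix \eqref{eq:egypt_sys} has a one-dimensional null space, exhibits the integral generator $(x_2\cdots x_n,\, \ldots,\, x_1\cdots x_{n-1})$, and then divides by its gcd to obtain $\r$. You instead construct $\r$ directly via $r_i = \operatorname{lcm}(x_1,\ldots,x_n)/x_i$, verify the equations of \eqref{eq:astruct_system} by hand, and obtain primitivity for free from the identity $\operatorname{lcm}(gx_1,\ldots,gx_n)=g\operatorname{lcm}(x_1,\ldots,x_n)$ --- so no normalization step is needed. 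Your approach is more elementary (no linear algebra) and also more complete on the final point: the paper asserts that the two processes are ``clearly inverse,'' whereas you actually verify both composites, which rests on the small but genuine identity $\operatorname{lcm}(mR/r_1,\ldots,mR/r_n)=mR/\gcd(r_1,\ldots,r_n)=mR$. What the paper's computation buys in exchange is the explicit statement that the null space of \eqref{eq:egypt_sys} is exactly one-dimensional (i.e., $\d$ determines $\r$ up to scaling); you never prove that directly, but your check that the inverse-then-forward composite is the identity delivers injectivity of the forward map all the same, so nothing is missing.
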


\begin{proof}
	Let $(\textbf{r}, \textbf{d})$ be an arithmetical structure on $mK_n$ and recognize that
	\[\sum_{i=1}^n \frac{r_i}{m\sum_{j=1}^n r_j} = \frac{1}{m}.\]
	Using the system \eqref{eq:astruct_system} we may write
	\[\frac{r_i}{m\sum_{j=1}^n r_j} = \frac{r_i}{mr_i + d_ir_i} = \frac{1}{m + d_i}.\]
	Thus we have
	\[\sum_{i=1}^n \frac{1}{m + d_i} = \frac{1}{m},\] so by taking $x_i = m + d_i$ we have a solution to \eqref{eq:egypt_frac}. 
	
	We now show that given a solution \textbf{x} to \eqref{eq:egypt_frac}, we can find an arithmetical structure for which $x_i = m + d_i$. Setting $d_i = x_i - m$ in the system \eqref{eq:astruct_system}, we need the null space of 
	\begin{equation}\label{eq:egypt_sys}\begin{pmatrix} m - x_1 & m & \cdots & m\\ m & m-x_2 & \cdots & m\\ \vdots & \vdots & \ddots & \vdots\\m & m & \cdots & m - x_n \end{pmatrix}\end{equation} to have dimension exactly one. Subtracting the first row from all other rows, and scaling row $i$ by $1/x_i$ for $i \geq 2$, we have that this matrix is row equivalent to
	\[\begin{pmatrix} m - x_1 & m & m & \cdots & m\\ -x_1/x_2 & 1 & 0 & \cdots & 0\\-x_1/x_3 & 0 & 1 & \cdots & 0\\ \vdots & \vdots & \vdots & \ddots & \vdots\\-x_1/x_n & 0 & 0 & \cdots & 1 \end{pmatrix}\]
	After subtracting the first row by multiples of $m$ of the other rows, all entries are zero except the top left, which becomes \[m - x_1 + \frac{mx_1}{x_2} + \cdots \frac{mx_1}{x_n}.\] Multiplying this expression by $x_2 \cdots x_n$ gives $m(x_1 \cdots x_{n-1} + \cdots + x_2 \cdots x_n) - x_1\cdots x_n$, which is 0 by \eqref{eq:egypt_frac}. Hence the matrix is reduced to 
	\[\begin{pmatrix} 0 & 0 & 0 & \cdots & 0\\ -x_1/x_2 & 1 & 0 & \cdots & 0\\-x_1/x_3 & 0 & 1 & \cdots & 0\\ \vdots & \vdots & \vdots & \ddots & \vdots\\-x_1/x_n & 0 & 0 & \cdots & 1 \end{pmatrix},\]
	which clearly has rank $n-1$ and null space with dimension 1. An integral generator of the null space is
	\[\textbf{q} = (x_2\cdots x_n, x_1 x_3 \cdots x_n, \ldots, x_1 \cdots x_{n-1})^T.\]
	
	We construct an arithmetical structure by taking $\r = \textbf{q}/\gcd(\textbf{q})$ and setting $d_i = x_i - m$. These two processes, going from arithmetical structure on $mK_n$ to a solution $\textbf{x}$ to \eqref{eq:egypt_frac}, are clearly inverse to one another. 
\end{proof}

With Theorem \ref{thm:egypt_complete}, we can use known results bounding the number of Egyptian fraction representations for a given fraction to give a bound for $\#A_{\dec}(mK_n)$ and compare this to that of Corollary \ref{cor:bound_mKn}. Modifying slightly the notation of \cite{BrowningElsholtz}, we define
\begin{equation}\label{def:f_count}
	f_n(a,m) = \#\left\lbrace (x_1, \ldots, x_n) \in \N^n : x_1 \leq \cdots \leq x_n \text{ and } \frac{a}{m} = \frac{1}{x_1} + \cdots + \frac{1}{x_n} \right\rbrace
\end{equation}
to count the number of Egyptian fraction representations of $a/m$ by $n$ terms. Observe that an arithmetical structure $(\r,\d)$ on $mK_n$ satisfies $r_1 \geq \cdots \geq r_n$ if and only if $d_1 \leq \cdots \leq d_n$, so by the correspondence in the proof of Theorem \ref{thm:egypt_complete}, we have
\[f_n(1,m) = \#A_{\dec}(mK_n).\]
The best known asymptotic bounds for $f_n(1,m)$ are given by Elsholtz--Planitzer \cite[Theorems 1.1, 1.4]{ElsholtzPlanitzer}, improving on Browning--Elsholtz \cite[Theorems 2, 3]{BrowningElsholtz}, giving us the following corollary.

\begin{corollary}
\label{cor:egypt_frac_bounds}
	Let $n \geq 3$, $m \geq 1$, and fix $\epsilon > 0$. Then we have 
	\begin{align*}
		\#A_{\dec}(mK_3) &\ll_\epsilon m^{\frac{3}{5} + \epsilon},\\		
		\#A_{\dec}(mK_4) &\ll_\epsilon m^{\frac{28}{17} + \epsilon}, \text{ and}\\		
		\#A_{\dec}(mK_3) &\ll_\epsilon (nm)^\epsilon\left(n^{4/3}m^2\right)^{\frac{28}{17}2^{n-5}} \text{ when } n \geq 5.
	\end{align*}
\end{corollary}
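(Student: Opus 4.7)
The plan is to reduce the statement directly to the Egyptian fraction counting function $f_n(1,m)$ defined in \eqref{def:f_count} and then invoke known analytic number theory results. By Theorem \ref{thm:egypt_complete}, the map sending $(\r,\d) \in A(mK_n)$ to $(x_1,\ldots,x_n) = (m + d_1, \ldots, m + d_n)$ is a bijection between $A(mK_n)$ and solutions $(x_1,\ldots,x_n) \in \N^n$ of \eqref{eq:egypt_frac}. As already observed in the excerpt, the ordering $r_1 \geq \cdots \geq r_n$ corresponds under this bijection to $d_1 \leq \cdots \leq d_n$, and hence to the ordering $x_1 \leq \cdots \leq x_n$ featured in \eqref{def:f_count}. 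Therefore $\#A_{\dec}(mK_n) = f_n(1,m)$, and the corollary becomes a statement purely about $f_n(1,m)$.

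Given this reformulation, the three cases are then immediate applications of the cited estimates. For $n = 3$, the bound $f_3(1,m) \ll_\epsilon m^{3/5 + \epsilon}$ is the main theorem of Elsholtz--Planitzer \cite[Theorem 1.1]{ElsholtzPlanitzer}, sharpening the Browning--Elsholtz bound of $m^{3/5+\epsilon}$ style from \cite{BrowningElsholtz}. For $n = 4$, the bound $f_4(1,m) \ll_\epsilon m^{28/17 + \epsilon}$ is likewise the $n=4$ case of \cite[Theorem 1.4]{ElsholtzPlanitzer}. For $n \geq 5$ the estimate should be extracted from the recursive bound in \cite{BrowningElsholtz} which relates $f_n$ to $f_{n-1}$: after fixing the smallest denominator $x_1$ (which is bounded above by $nm$ since $1/x_1 \geq 1/(nm)$) one is left with counting Egyptian representations of $1/m - 1/x_1$ by $n-1$ terms, and iterating this down to the $n=4$ base case produces the quoted exponent $\frac{28}{17}2^{n-5}$ together with the $(nm)^\epsilon$ factor from the range of divisors.

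The main obstacle is not mathematical depth but bookkeeping: one must carefully match conventions between our setup and those of \cite{BrowningElsholtz, ElsholtzPlanitzer} (in particular, whether the $x_i$ are required to be distinct, and whether the literature states results for ordered or unordered tuples, as this can alter the constant but not the asymptotic exponent), and one must correctly iterate the recursive estimate to confirm that it produces exactly $n^{4/3} m^2$ as the base under the $\frac{28}{17} 2^{n-5}$ exponent rather than some competing quantity. Once these conventions are aligned with our $A_{\dec}$ convention, the three cases follow directly from the cited theorems with no further argument needed.
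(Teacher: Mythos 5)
Your proposal matches the paper's argument: the paper likewise identifies $\#A_{\dec}(mK_n)$ with $f_n(1,m)$ via the bijection of Theorem \ref{thm:egypt_complete} (noting that $r_1 \geq \cdots \geq r_n$ corresponds to $x_1 \leq \cdots \leq x_n$) and then quotes the bounds of Elsholtz--Planitzer \cite[Theorems 1.1, 1.4]{ElsholtzPlanitzer} directly, without re-deriving the $n \geq 5$ case from the Browning--Elsholtz recursion as you sketch. This is essentially the same proof.
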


Note that while this is an asymptotic improvement over Corollary \ref{cor:bound_mKn}, it does not give explicit constants. We also note that the exponential shape of the bounds in Corollaries \ref{cor:bound_mKn} and \ref{cor:egypt_frac_bounds} are somewhat similar. This may suggest that it would take a significant advance to close the large gap between the actual values and known bounds, as seen in Table \ref{tab:small_mKn}.

\bibliographystyle{alpha}
\bibliography{arith_structs_bib}

\end{document}